\documentclass[11pt]{amsart}

\usepackage[margin=1.08in]{geometry}
\usepackage{amsmath,amssymb,amsthm,mathtools}
\usepackage{booktabs}
\usepackage{placeins}
\usepackage{microtype}
\usepackage{xcolor}
\usepackage[colorlinks=true,linkcolor=blue!55!black,citecolor=blue!55!black,urlcolor=blue!60!black]{hyperref}
\hypersetup{
  pdftitle={Lyapunov Functions and R-Linear Convergence for Quadratic Barzilai--Borwein Dynamics},
  pdfauthor={Shutai Yang and Ya-xiang Yuan},
  pdfsubject={Barzilai--Borwein dynamics; Lyapunov functions; R-linear convergence; strongly convex quadratics},
  pdfkeywords={Barzilai--Borwein method, quadratic dynamics, Lyapunov function, R-linear convergence, invariant spectral faces}
}

\numberwithin{equation}{section}

\newtheorem{theorem}{Theorem}[section]
\newtheorem{proposition}[theorem]{Proposition}
\newtheorem{corollary}[theorem]{Corollary}
\newtheorem{lemma}[theorem]{Lemma}
\theoremstyle{definition}

\theoremstyle{remark}
\newtheorem{remark}[theorem]{Remark}

\newcommand{\R}{\mathbb{R}}
\newcommand{\norm}[1]{\lVert #1\rVert}
\newcommand{\spec}{\operatorname{spec}}
\newcommand{\diag}{\operatorname{diag}}
\newcommand{\BB}{\mathrm{BB}}

\title[Lyapunov Functions and R-Linear Convergence]{Lyapunov Functions and R-Linear Convergence for Quadratic Barzilai--Borwein Dynamics}

\author{Shutai Yang}
\address{State Key Laboratory of Scientific and Engineering Computing, Academy of Mathematics and Systems Science, Chinese Academy of Sciences, and University of Chinese Academy of Sciences, Beijing, China}
\address{School of Mathematical Sciences, University of Science and Technology of China, Hefei, China}
\email{yangshutai26@mails.ucas.ac.cn}

\author{Ya-xiang Yuan}
\address{State Key Laboratory of Scientific and Engineering Computing, Academy of Mathematics and Systems Science, Chinese Academy of Sciences, Beijing, China}
\email{yyx@lsec.cc.ac.cn}
\thanks{The work of Shutai Yang was supported by the National College Students Innovation and Entrepreneurship Training Program (Project No.~202510358091). The work of Ya-xiang Yuan was supported by the National Natural Science Foundation of China (Grant No.~12288201).}

\subjclass[2020]{90C25, 65K05}
\keywords{Barzilai--Borwein method, strongly convex quadratic, Lyapunov function, R-linear convergence, invariant spectral face}
\date{August 2026}

\begin{document}

\begin{abstract}
The Barzilai--Borwein (BB) method is $R$-linearly convergent on strongly convex quadratics, although neither the objective value nor the gradient norm is generally monotone. We construct an explicit current-point Lyapunov function for the quadratic dynamics after the warm-up step, assuming that both endpoints of the initial active spectrum survive this step. If $a<b$ are these endpoints and $P_a,P_b$ are the corresponding spectral projectors, then
\[
  \overline{f}(x)
  :=\lVert P_a\nabla f(x)\rVert^{\frac{2b}{a+b}}
    \lVert P_b\nabla f(x)\rVert^{\frac{2a}{a+b}}
\]
satisfies, for BB1, BB2, and every fixed positive weighted delayed Rayleigh rule,
\[
  \overline{f}(x_{k+1})
  =\left(\frac{b-a}{b+a}\right)^2
    e^{-2D_{a,b}(\tau_k)}\overline{f}(x_k),
  \qquad D_{a,b}(\tau_k)\ge0,
\]
where $\tau_k=1/\alpha_k$ is the reciprocal step. The function is obtained by denormalizing the endpoint coboundary certificate in the sharp-rate analysis of Yang and Yuan. We prove that the endpoint exponents are the unique minimax choice among endpoint monomials and illustrate the Lyapunov law on a nonmonotone BB1 trajectory. Using this law, we also give a proof of $R$-linear convergence of the method in finite dimensions.
\end{abstract}

\maketitle

\section{Introduction}

Consider the strongly convex quadratic problem
\begin{equation}\label{eq:quadratic}
  \min_{x\in\R^d} f(x)
  :=\frac12 x^{\mathsf T}Hx-b^{\mathsf T}x,
  \qquad H=H^{\mathsf T}\succ0.
\end{equation}
Let
\[
  x_*:=H^{-1}b,
  \qquad
  g(x):=\nabla f(x)=H(x-x_*).
\]
The Barzilai--Borwein method is globally convergent on \eqref{eq:quadratic}, but its individual steps need not decrease $f$; this nonmonotone behavior is part of the classical theory \cite{BarzilaiBorwein1988,Raydan1993,DaiLiao2002}.  The motivating question is therefore:

\begin{quote}
Can one construct from the quadratic objective a new function $\overline f$ such that the BB points generated for $f$ satisfy
\[
  \overline f(x_{k+1})\le \overline f(x_k)?
\]
\end{quote}

We answer this question by constructing an endpoint Lyapunov function for the dynamics after the warm-up step.  We make one convention throughout the article: the initial gradient is denoted by $g_{-1}$, its warm-up successor by $g_0$, and every update with index $k\ge0$ is a genuine delayed BB update.  Set
\begin{equation}\label{eq:post-warmup-spectrum-intro}
  \Lambda:=\Lambda(g_{-1}),
\end{equation}
and suppose that $|\Lambda|\ge2$ and that its two endpoints remain active in $g_0$.  We omit the one-point case because a genuine BB step then uses the reciprocal eigenvalue and reaches the minimizer immediately.

Write
\begin{equation}\label{eq:active-endpoints-intro}
  a:=\min\Lambda,
  \qquad
  b:=\max\Lambda,
  \qquad
  0<a<b,
\end{equation}
so the endpoint assumption is
\begin{equation}\label{eq:initial-endpoint-persistence}
  P_ag_j\ne0,
  \qquad
  P_bg_j\ne0
  \qquad(j=-1,0).
\end{equation}
Let $P_a,P_b$ be the corresponding orthogonal spectral projectors.  The component recurrence below shows that
\begin{equation}\label{eq:post-warmup-support}
  \Lambda(g_k)\subseteq\Lambda,
  \qquad
  P_ag_k\ne0,
  \qquad
  P_bg_k\ne0
  \qquad(k\ge-1).
\end{equation}
Thus the endpoints and their projectors are fixed, although interior spectral components may disappear.  The associated endpoint Lyapunov function is
\begin{equation}\label{eq:lyapunov-intro}
  \boxed{
  \overline f(x)
  :=\norm{P_ag(x)}^{\frac{2b}{a+b}}
    \norm{P_bg(x)}^{\frac{2a}{a+b}}.}
\end{equation}
It is two-homogeneous in the error.  Indeed,
\begin{equation}\label{eq:error-form-intro}
  \overline f(x)
  =a^{\frac{2b}{a+b}}b^{\frac{2a}{a+b}}
   \norm{P_a(x-x_*)}^{\frac{2b}{a+b}}
   \norm{P_b(x-x_*)}^{\frac{2a}{a+b}}.
\end{equation}
On the post-warm-up tail, both endpoint components are nonzero, so $\overline f(x_k)>0$.  $\overline f$ is therefore a Lyapunov function which obeys the exact one-step law
\begin{equation}\label{eq:intro-law}
  \overline f(x_{k+1})
  =c_{a,b}^{\,2}e^{-2D_{a,b}(\tau_k)}\overline f(x_k)
  \le c_{a,b}^{\,2}\overline f(x_k),
  \qquad
  c_{a,b}:=\frac{b-a}{b+a}<1,
\end{equation}
where $\tau_k=1/\alpha_k$ is the reciprocal BB step and $D_{a,b}\ge0$.  Thus $\overline f(x_k)$ is strictly decreasing even though neither $f(x_k)$ nor $\norm{g_k}$ need be monotone.

The construction was suggested by the endpoint coboundary identity in the sharp-rate analysis of Yang and Yuan \cite{YangYuan2026}.  In normalized spectral variables, their identity has the form
\[
  \log r=\log c-D+h\circ T-h.
\]
The key observation is that multiplying the unnormalized gradient norm by $e^{-h}$ produces exactly the square root of \eqref{eq:lyapunov-intro}.

The same identity also gives a proof of the classical $R$-linear convergence result.  If the full gradient norm does not contract, \eqref{eq:intro-law} forces the normalized state toward a boundary on which one endpoint energy vanishes.  Induction over the finite family of faces and compactness then give constants $C_\Lambda<\infty$ and $\rho_\Lambda\in(0,1)$ such that
\begin{equation}\label{eq:intro-rlinear}
  \norm{g_k}\le C_\Lambda\rho_\Lambda^k\norm{g_0}
  \qquad(k\ge0).
\end{equation}

This viewpoint is related to a longer spectral-dynamical tradition.  Akaike transformed normalized gradient energies into probability distributions \cite{Akaike1959}; Forsythe used a monotone normalized quantity to study asymptotic directions \cite{Forsythe1968}; and Pronzato, Wynn, and Zhigljavsky developed monotone moment quantities for a family of current-gradient methods \cite{PronzatoWynnZhigljavsky2006}.  Their family does not contain BB, whose delayed Rayleigh quotient requires a two-step state.  The endpoint coboundary in \cite{YangYuan2026} is the corresponding BB structure.

The remainder of this article records the post-warm-up dynamics, proves the exact Lyapunov law and the minimax optimality of its exponents, derives the function from the coboundary certificate, gives the invariant-face proof of $R$-linear convergence, and gives a numerical example.

\section{Post-warm-up quadratic BB dynamics}\label{sec:bb-dynamics}

Let the distinct eigenvalues of $H$ be
\[
  0<\lambda_1<\cdots<\lambda_s,
\]
and let $P_\lambda$ denote the orthogonal projector onto the eigenspace associated with $\lambda$.  Then
\[
  H=\sum_{\lambda\in\spec(H)}\lambda P_\lambda,
  \qquad
  g=\sum_{\lambda\in\spec(H)}P_\lambda g.
\]
For a nonzero vector $v$, define
\begin{equation}\label{eq:active-spectrum}
  \Lambda(v):=\{\lambda\in\spec(H):P_\lambda v\ne0\}.
\end{equation}

The post-warm-up BB tail satisfies
\begin{equation}\label{eq:bb-update}
  x_{k+1}=x_k-\alpha_kg_k,
  \qquad
  g_k:=g(x_k),
  \qquad k\ge0,
\end{equation}
so that
\begin{equation}\label{eq:gradient-update}
  g_{k+1}=(I-\alpha_kH)g_k.
\end{equation}
Every displayed step is generated from the preceding gradient.  Hence, for $k\ge0$,
\begin{equation}\label{eq:delayed-bb}
  \alpha_k^{\BB1}
  =\frac{g_{k-1}^{\mathsf T}g_{k-1}}
         {g_{k-1}^{\mathsf T}Hg_{k-1}},
  \qquad
  \alpha_k^{\BB2}
  =\frac{g_{k-1}^{\mathsf T}Hg_{k-1}}
         {g_{k-1}^{\mathsf T}H^2g_{k-1}}.
\end{equation}
Set
\begin{equation}\label{eq:tau}
  \tau_k:=\alpha_k^{-1}.
\end{equation}

\begin{lemma}\label{lem:tau-range}
Under the post-warm-up convention, the endpoint components remain nonzero and both BB rules satisfy
\begin{equation}\label{eq:tau-range}
  a<\tau_k<b
  \qquad(k\ge0).
\end{equation}
More generally, $\tau_k$ is a convex combination of the eigenvalues active in $g_{k-1}$.
\end{lemma}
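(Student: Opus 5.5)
The plan is to write everything in the spectral decomposition of $H$ and argue by induction on $k\ge -1$. Set $w_\lambda^{(k)}:=\norm{P_\lambda g_k}^2\ge0$. Since each $P_\lambda$ commutes with $H$,
\[
  g_k^{\mathsf T}H^m g_k=\sum_{\lambda\in\Lambda(g_k)}\lambda^m w_\lambda^{(k)}
  \qquad(m=0,1,2),
\]
and so
\[
  \tau_{k+1}^{\BB1}=\frac{\sum_\lambda \lambda\, w_\lambda^{(k)}}{\sum_\lambda w_\lambda^{(k)}},
  \qquad
  \tau_{k+1}^{\BB2}=\frac{\sum_\lambda \lambda\,(\lambda w_\lambda^{(k)})}{\sum_\lambda \lambda w_\lambda^{(k)}}.
\]
Both are convex combinations of the eigenvalues in $\Lambda(g_k)$, with weights proportional to $w_\lambda^{(k)}$ and $\lambda w_\lambda^{(k)}$ respectively. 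All these weights are positive on $\Lambda(g_k)$ because $\lambda>0$. This gives the ``more generally'' clause for $\tau_{k+1}$ at once, provided $g_k\ne0$.

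The induction also needs the component recurrence. From \eqref{eq:gradient-update},
\[
  P_\lambda g_{k+1}=(1-\tau_k^{-1}\lambda)P_\lambda g_k,
\]
so $\Lambda(g_{k+1})\subseteq\Lambda(g_k)\subseteq\Lambda$. The base case is $k=0$: by \eqref{eq:initial-endpoint-persistence} we have $a,b\in\Lambda(g_{-1})$, and $\Lambda(g_{-1})=\Lambda\subseteq[a,b]$. The convex combination defining $\tau_0$ therefore puts strictly positive weight on both $a$ and $b$. Since $a<b$, this forces $a<\tau_0<b$ strictly.

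For the inductive step, assume $P_ag_{k-1}\ne0$, $P_bg_{k-1}\ne0$, $\Lambda(g_{k-1})\subseteq\Lambda$, and hence $a<\tau_{k-1}<b$. The endpoint factors $1-a/\tau_{k-1}$ and $1-b/\tau_{k-1}$ are then nonzero, because $\tau_{k-1}\notin\{a,b\}$. So $P_ag_k\ne0$ and $P_bg_k\ne0$, and $\Lambda(g_k)\subseteq\Lambda$. The same positive-weight argument then yields $a<\tau_k<b$. Interior components may vanish when $\tau$ hits an interior eigenvalue, but this is harmless: only the endpoints matter for strictness. The case $k=0$ uses the hypothesis at $j=0$ directly rather than the recurrence from the warm-up, since the warm-up step is not assumed to be a delayed BB step. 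This is exactly why \eqref{eq:initial-endpoint-persistence} is imposed for both $j=-1$ and $j=0$.

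The argument is routine. The only delicate point is keeping the induction indices aligned. The step $\tau_k$ is built from $g_{k-1}$, while persistence of the endpoints in $g_k$ needs $\tau_{k-1}\ne a,b$. So the inductive hypothesis must carry endpoint persistence for $g_{k-1}$ and $g_k$ jointly, with $j=-1,0$ as the base. The same reasoning covers any fixed positive weighted delayed Rayleigh rule, with weights $\lambda^{p}w_\lambda$ for $p$ fixed.
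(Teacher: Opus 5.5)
Your proof is correct and follows the same route as the paper: write $\tau_k$ as a convex combination over $\Lambda(g_{k-1})$ with strictly positive endpoint weights, then propagate endpoint persistence through the component recurrence $P_\lambda g_{k+1}=(1-\lambda/\tau_k)P_\lambda g_k$, starting from the two-index hypothesis at $j=-1,0$. Two small points of wording: in the inductive step, $a<\tau_{k-1}<b$ belongs to the inductive hypothesis because it comes from $g_{k-2}$, so it is not a consequence of your assumptions on $g_{k-1}$; and the base inclusion $\Lambda(g_0)\subseteq\Lambda$ uses that the warm-up is itself a gradient step, which the paper also leaves implicit.
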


\begin{proof}
For BB1,
\begin{equation}\label{eq:tau-bb1}
  \tau_k^{\BB1}
  =\frac{g_{k-1}^{\mathsf T}Hg_{k-1}}
         {g_{k-1}^{\mathsf T}g_{k-1}}
  =\sum_{\lambda\in\Lambda}
    \lambda\frac{\norm{P_\lambda g_{k-1}}^2}{\norm{g_{k-1}}^2}.
\end{equation}
For BB2,
\begin{equation}\label{eq:tau-bb2}
  \tau_k^{\BB2}
  =\frac{g_{k-1}^{\mathsf T}H^2g_{k-1}}
         {g_{k-1}^{\mathsf T}Hg_{k-1}}
  =\sum_{\lambda\in\Lambda}
    \lambda\frac{\lambda\norm{P_\lambda g_{k-1}}^2}
    {g_{k-1}^{\mathsf T}Hg_{k-1}}.
\end{equation}
The weights are nonnegative and sum to one.  At $k=0$, the weights at $a$ and $b$ are positive by \eqref{eq:initial-endpoint-persistence}, so the convex combination is strict.  Since the endpoint components of $g_0$ are also nonzero, \eqref{eq:gradient-update} gives
\[
  P_ag_1=\left(1-\frac{a}{\tau_0}\right)P_ag_0\ne0,
  \qquad
  P_bg_1=\left(1-\frac{b}{\tau_0}\right)P_bg_0\ne0.
\]
Repeating the same argument proves by induction that the endpoint components remain nonzero and that \eqref{eq:tau-range} holds for every $k\ge0$.  
Projecting \eqref{eq:gradient-update} onto an eigenspace gives
\begin{equation}\label{eq:component-update}
  P_\lambda g_{k+1}
  =\left(1-\frac{\lambda}{\tau_k}\right)P_\lambda g_k.
\end{equation}
The same component recurrence also shows that no spectral component outside $\Lambda$ can be created, proving \eqref{eq:post-warmup-support}.
\end{proof}

\begin{remark}\label{rem:weighted}
The same result holds for
\begin{equation}\label{eq:weighted-rule}
  \alpha_k^\Psi
  =\frac{g_{k-1}^{\mathsf T}\Psi(H)g_{k-1}}
         {g_{k-1}^{\mathsf T}\Psi(H)Hg_{k-1}},
  \qquad
  \Psi(\lambda)>0\quad(\lambda\in\Lambda).
\end{equation}
Indeed,
\[
  (\alpha_k^\Psi)^{-1}
  =\sum_{\lambda\in\Lambda}
   \lambda
   \frac{\Psi(\lambda)\norm{P_\lambda g_{k-1}}^2}
        {\sum_{\mu\in\Lambda}\Psi(\mu)\norm{P_\mu g_{k-1}}^2}
  \in(a,b).
\]
The choices $\Psi=I$ and $\Psi=H$ give BB1 and BB2, respectively.
\end{remark}

\section{An endpoint Lyapunov function and its exact dissipation law}\label{sec:lyapunov}

The elementary inequality below determines both the exponents and the contraction factor.

\begin{lemma}\label{lem:endpoint-ineq}
Let $0<a<b$ and $u\in[a,b]$.  Then
\begin{equation}\label{eq:endpoint-ineq}
  \left(\frac{u-a}{u}\right)^{\frac{b}{a+b}}
  \left(\frac{b-u}{u}\right)^{\frac{a}{a+b}}
  \le \frac{b-a}{b+a}.
\end{equation}
Equality holds if and only if $u=(a+b)/2$.
\end{lemma}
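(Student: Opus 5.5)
We take logarithms and show that the left-hand side of \eqref{eq:endpoint-ineq}, viewed as a function of $u$, is maximized at the midpoint. At the endpoints $u=a$ and $u=b$ one factor vanishes (the exponents are positive), so the inequality is strict there. It therefore suffices to study $u\in(a,b)$. Put $p=\frac{b}{a+b}$ and $q=\frac{a}{a+b}$, so $p+q=1$, and set
\[
  \phi(u):=p\log(u-a)+q\log(b-u)-\log u .
\]
Here we have used $(p+q)\log u=\log u$.

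Differentiating gives
\[
  \phi'(u)=\frac{p}{u-a}-\frac{q}{b-u}-\frac1u .
\]
Over the common denominator $u(u-a)(b-u)$, the numerator is
\[
  p\,u(b-u)-q\,u(u-a)-(u-a)(b-u).
\]
Expanding and using $p+q=1$, the $u^2$ terms cancel. The coefficient of $u$ is $pb+qa-(a+b)=\frac{b^2+a^2}{a+b}-(a+b)=-\frac{2ab}{a+b}$, and the constant term is $ab$. So the numerator equals $ab\bigl(1-\frac{2u}{a+b}\bigr)$. This is positive for $u<\frac{a+b}{2}$ and negative for $u>\frac{a+b}{2}$. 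Hence $\phi$ is strictly increasing and then strictly decreasing on $(a,b)$, with its unique maximum at $u=(a+b)/2$.

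At the midpoint we have $\frac{u-a}{u}=\frac{b-u}{u}=\frac{b-a}{b+a}$. Since $p+q=1$, the left-hand side of \eqref{eq:endpoint-ineq} equals $\frac{b-a}{b+a}$ there. Strict unimodality gives the inequality with equality only at the midpoint, and the endpoint cases were handled at the start.

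The only delicate step is the algebraic cancellation in the numerator of $\phi'$. It is exactly where the choice of exponents $p=b/(a+b)$ and $q=a/(a+b)$ matters: it kills the quadratic term and leaves a numerator that is linear in $u$ and vanishes at the midpoint. I would double-check that expansion carefully; everything else is routine.
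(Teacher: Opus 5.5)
Your proof is correct, and it takes a different route from the paper's.

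\textbf{The paper's route.} The paper uses no derivatives. It substitutes $t=(u-a)/(b-u)$, so that $u=(a+tb)/(1+t)$ and the left-hand side becomes $\frac{b-a}{a+tb}\,t^{b/(a+b)}$. It then applies weighted AM--GM to $1$ and $t$, with weights $a/(a+b)$ and $b/(a+b)$, to get $t^{b/(a+b)}\le (a+tb)/(a+b)$. The bound follows in one line. The equality case $t=1$, i.e.\ $u=(a+b)/2$, comes directly from the equality case of AM--GM. The exponents also appear naturally, as exactly the weights whose arithmetic mean reproduces the denominator $a+tb$.

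\textbf{What your route gives.} Your sign analysis of $\phi'$ is more computational, but it yields more information: the left-hand side is strictly increasing on $[a,(a+b)/2]$ and strictly decreasing on $[(a+b)/2,b]$. It is also exactly the $p=b/(a+b)$ case of the computation the paper uses for Proposition~\ref{prop:optimal-weights}. There, differentiating $\log R_p$ gives the maximizer $u_p^*=ab/(b(1-p)+ap)$, which reduces to the midpoint at $p=b/(a+b)$.

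\textbf{A small correction to your closing remark.} The cancellation of the $u^2$ terms uses only $p+q=1$, so it holds for every exponent pair in that family. What the specific choice $p=b/(a+b)$ does is place the zero of the linear numerator at the midpoint. At that point the two ratios coincide and the value is exactly $(b-a)/(b+a)$.
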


\begin{proof}
The endpoint cases are immediate.  For $u\in(a,b)$, put
\[
  t:=\frac{u-a}{b-u}>0.
\]
Then $u=(a+tb)/(1+t)$, and the left-hand side of \eqref{eq:endpoint-ineq} is
\[
  \frac{b-a}{a+tb}\,t^{\frac{b}{a+b}}.
\]
Weighted AM--GM applied to $1$ and $t$, with weights $a/(a+b)$ and $b/(a+b)$, gives
\[
  \frac{a+tb}{a+b}\ge t^{\frac{b}{a+b}}.
\]
This proves the inequality.  Equality holds exactly when $t=1$, equivalently $u=(a+b)/2$.
\end{proof}

Define
\begin{equation}\label{eq:lyapunov}
  \overline f(x)
  :=\norm{P_ag(x)}^{\frac{2b}{a+b}}
    \norm{P_bg(x)}^{\frac{2a}{a+b}}
\end{equation}
and
\begin{equation}\label{eq:cab}
  c_{a,b}:=\frac{b-a}{b+a}.
\end{equation}
For $u\in(a,b)$, define the nonnegative defect
\begin{equation}\label{eq:defect}
  D_{a,b}(u)
  :=\log c_{a,b}
    -\frac{b}{a+b}\log\frac{u-a}{u}
    -\frac{a}{a+b}\log\frac{b-u}{u}.
\end{equation}
Lemma~\ref{lem:endpoint-ineq} gives $D_{a,b}(u)\ge0$, with equality exactly at $u=(a+b)/2$.

\begin{theorem}\label{thm:exact-dissipation}
For every BB1 or BB2 step with $k\ge0$ on the post-warm-up tail,
\begin{align}
  \overline f(x_{k+1})
  &=
  \left(\frac{\tau_k-a}{\tau_k}\right)^{\frac{2b}{a+b}}
  \left(\frac{b-\tau_k}{\tau_k}\right)^{\frac{2a}{a+b}}
  \overline f(x_k) \label{eq:exact-factor}\\
  &=c_{a,b}^{\,2}e^{-2D_{a,b}(\tau_k)}\overline f(x_k).\label{eq:defect-identity}
\end{align}
Consequently,
\begin{equation}\label{eq:lyapunov-contraction}
  0<\overline f(x_{k+1})
  \le c_{a,b}^{\,2}\overline f(x_k)
  <\overline f(x_k)
  \qquad(k\ge0).
\end{equation}
The same statements hold for every fixed positive weighted delayed rule \eqref{eq:weighted-rule}.
\end{theorem}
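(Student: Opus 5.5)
The argument is a direct computation from the component recurrence \eqref{eq:component-update}, combined with Lemma~\ref{lem:tau-range} and Lemma~\ref{lem:endpoint-ineq}.

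First, project the gradient update onto the two endpoint eigenspaces. This gives
\[
  P_ag_{k+1}=\Bigl(1-\frac{a}{\tau_k}\Bigr)P_ag_k,
  \qquad
  P_bg_{k+1}=\Bigl(1-\frac{b}{\tau_k}\Bigr)P_bg_k .
\]
Lemma~\ref{lem:tau-range} gives $a<\tau_k<b$. Hence $|1-a/\tau_k|=(\tau_k-a)/\tau_k$ and $|1-b/\tau_k|=(b-\tau_k)/\tau_k$, and both are strictly positive. Taking norms and raising the two identities to the powers $2b/(a+b)$ and $2a/(a+b)$ respectively, then multiplying, yields \eqref{eq:exact-factor} directly from the definition \eqref{eq:lyapunov}. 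The one point that needs care is the absolute values. The $b$-component is multiplied by a negative scalar, so its norm is multiplied by $(b-\tau_k)/\tau_k$. This is why the factor involves $b-\tau_k$ rather than $\tau_k-b$.

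Second, take logarithms of the factor in \eqref{eq:exact-factor}. Its log equals $2\bigl[\tfrac{b}{a+b}\log\tfrac{\tau_k-a}{\tau_k}+\tfrac{a}{a+b}\log\tfrac{b-\tau_k}{\tau_k}\bigr]$. By the definition \eqref{eq:defect}, this is exactly $2\log c_{a,b}-2D_{a,b}(\tau_k)$. Exponentiating gives \eqref{eq:defect-identity}. This step is pure bookkeeping, and it is valid because $\tau_k\in(a,b)$ lies in the domain of $D_{a,b}$.

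Third, for \eqref{eq:lyapunov-contraction}:
\begin{itemize}
\item Positivity of $\overline f(x_{k+1})$ holds because the endpoint components of $g_{k+1}$ are nonzero, by \eqref{eq:post-warmup-support} from Lemma~\ref{lem:tau-range}.
\item The bound $\overline f(x_{k+1})\le c_{a,b}^2\overline f(x_k)$ follows from $D_{a,b}(\tau_k)\ge0$, i.e.\ Lemma~\ref{lem:endpoint-ineq} with $u=\tau_k$.
\item The strict inequality $c_{a,b}^2\overline f(x_k)<\overline f(x_k)$ uses $0<c_{a,b}<1$ together with $\overline f(x_k)>0$.
\end{itemize}

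Finally, for a weighted rule \eqref{eq:weighted-rule}, Remark~\ref{rem:weighted} already places $\tau_k=(\alpha_k^\Psi)^{-1}$ in $(a,b)$ by the same induction. The derivation above used only that fact and the component recurrence, so it carries over verbatim. No step is a real obstacle. The only care needed is in the sign handling of the $b$-component and in checking that endpoint persistence is available at every $k\ge0$, which Lemma~\ref{lem:tau-range} supplies.
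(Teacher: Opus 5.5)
Your proposal is correct and follows the paper's proof essentially line for line. The component recurrence with $a<\tau_k<b$ gives the endpoint norm multipliers, substitution into $\overline f$ gives \eqref{eq:exact-factor}, the definition of $D_{a,b}$ gives \eqref{eq:defect-identity}, Lemma~\ref{lem:endpoint-ineq} gives the contraction, and Remark~\ref{rem:weighted} covers the weighted rules; your explicit handling of the sign of $1-b/\tau_k$ and of the positivity behind the strict inequality only spells out details the paper leaves implicit.
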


\begin{proof}
By \eqref{eq:component-update} and $a<\tau_k<b$,
\[
  \norm{P_ag_{k+1}}
  =\frac{\tau_k-a}{\tau_k}\norm{P_ag_k},
  \qquad
  \norm{P_bg_{k+1}}
  =\frac{b-\tau_k}{\tau_k}\norm{P_bg_k}.
\]
Substitution into \eqref{eq:lyapunov} gives \eqref{eq:exact-factor}.  The definition \eqref{eq:defect} gives \eqref{eq:defect-identity}, and Lemma~\ref{lem:endpoint-ineq} gives \eqref{eq:lyapunov-contraction}.  The weighted case follows from Remark~\ref{rem:weighted}.
\end{proof}

The theorem therefore answers the original monotonicity question.

\begin{corollary}\label{cor:cumulative}
For every $k\ge1$,
\begin{equation}\label{eq:cumulative}
  \overline f(x_k)
  =c_{a,b}^{\,2k}
   \exp\!\left(-2\sum_{j=0}^{k-1}D_{a,b}(\tau_j)\right)
   \overline f(x_0).
\end{equation}
Equivalently, $c_{a,b}^{-2k}\overline f(x_k)$ is nonincreasing.
\end{corollary}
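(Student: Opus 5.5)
The plan is to iterate the exact one-step law \eqref{eq:defect-identity} of Theorem~\ref{thm:exact-dissipation} and telescope. For each $j\ge0$ on the post-warm-up tail, Lemma~\ref{lem:tau-range} gives $a<\tau_j<b$. Hence $D_{a,b}(\tau_j)$ is well defined and finite, and
\[
  \overline f(x_{j+1})=c_{a,b}^{\,2}e^{-2D_{a,b}(\tau_j)}\overline f(x_j).
\]
I will argue by induction on $k\ge1$. The case $k=1$ is exactly \eqref{eq:defect-identity} with $k=0$. For the inductive step, I multiply the identity for $x_{k}$ by the factor $c_{a,b}^{\,2}e^{-2D_{a,b}(\tau_k)}$ and combine the exponentials, $e^{-2\sum_{j<k}D}\,e^{-2D(\tau_k)}=e^{-2\sum_{j\le k}D}$. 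This gives \eqref{eq:cumulative} at $k+1$. Alternatively, I can take the product of the one-step identities for $j=0,\dots,k-1$ directly. All quantities are strictly positive by \eqref{eq:post-warmup-support}, so nothing degenerates.

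For the equivalent formulation, I set $F_k:=c_{a,b}^{-2k}\overline f(x_k)$. Dividing the one-step law by $c_{a,b}^{2(k+1)}$ gives $F_{k+1}=e^{-2D_{a,b}(\tau_k)}F_k$. Since $D_{a,b}(\tau_k)\ge0$ by Lemma~\ref{lem:endpoint-ineq}, this yields $F_{k+1}\le F_k$, so $F_k$ is nonincreasing. This recursion also shows the equivalence directly: \eqref{eq:cumulative} reads $F_k=\exp(-2\sum_{j<k}D_{a,b}(\tau_j))F_0$, and the exponential factor is a product of numbers in $(0,1]$.

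There is no real obstacle. The only points to check are that $\tau_j\in(a,b)$ for every $j$, which is Lemma~\ref{lem:tau-range}, and that the endpoint components stay nonzero, which is \eqref{eq:post-warmup-support}. These ensure that $D_{a,b}$ is finite and that $\overline f(x_k)>0$. The same argument applies verbatim to the weighted rules of Remark~\ref{rem:weighted}.
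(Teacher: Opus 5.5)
Your proposal is correct and takes the same approach as the paper, whose proof simply multiplies the one-step identity \eqref{eq:defect-identity} over $j=0,\ldots,k-1$. Your induction and the recursion $c_{a,b}^{-2(k+1)}\overline f(x_{k+1})=e^{-2D_{a,b}(\tau_k)}\,c_{a,b}^{-2k}\overline f(x_k)$ spell out that telescoping more explicitly, and you correctly supply the checks $\tau_j\in(a,b)$ from Lemma~\ref{lem:tau-range} and $\overline f(x_k)>0$ from \eqref{eq:post-warmup-support}.
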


\begin{proof}
Multiply \eqref{eq:defect-identity} over $j=0,\ldots,k-1$.
\end{proof}

Formula \eqref{eq:cumulative} separates the worst endpoint rate from the additional dissipation.  If
\[
  \liminf_{k\to\infty}
  \frac1k\sum_{j=0}^{k-1}D_{a,b}(\tau_j)>0,
\]
then the root factor of this Lyapunov function is strictly smaller than $c_{a,b}^2$.  Conversely, a tail that is asymptotically extremal for this energy must have vanishing average defect, which forces most reciprocal steps to spend most of their time near the midpoint $(a+b)/2$.

\section{Why the exponents are optimal}\label{sec:optimality}

Consider the endpoint-monomial family of candidates
\begin{equation}\label{eq:Vp}
  V_p(x)
  :=\norm{P_ag(x)}^{2p}\norm{P_bg(x)}^{2(1-p)},
  \qquad 0<p<1.
\end{equation}
For a reciprocal step $u\in[a,b]$, the square-root one-step factor is
\begin{equation}\label{eq:Rp}
  R_p(u)
  :=\left(\frac{u-a}{u}\right)^p
    \left(\frac{b-u}{u}\right)^{1-p}.
\end{equation}

\begin{proposition}\label{prop:optimal-weights}
For every $0<p<1$,
\begin{equation}\label{eq:Gamma-p}
  \max_{u\in[a,b]}R_p(u)
  =(b-a)\frac{p^p(1-p)^{1-p}}{b^pa^{1-p}}.
\end{equation}
The unique minimizer of this worst one-step factor is
\begin{equation}\label{eq:pstar}
  p_*:=\frac{b}{a+b},
  \qquad
  1-p_*:=\frac{a}{a+b},
\end{equation}
and
\begin{equation}\label{eq:minimax-value}
  \min_{0<p<1}\max_{u\in[a,b]}R_p(u)
  =\frac{b-a}{b+a}=c_{a,b}.
\end{equation}
Thus \eqref{eq:lyapunov} is the unique minimax member of the family \eqref{eq:Vp}.
\end{proposition}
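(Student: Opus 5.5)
The plan has two stages. First we compute the inner maximum of $R_p$ over $[a,b]$ exactly. Then we minimize the resulting closed form over $p\in(0,1)$.

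\textbf{Inner maximum.} We reuse the substitution from the proof of Lemma~\ref{lem:endpoint-ineq}, since it handles general $p$. Since $R_p$ vanishes at $u=a$ and $u=b$ and is positive inside, the maximum is attained in $(a,b)$. Put $t=(u-a)/(b-u)\in(0,\infty)$, so that $u=(a+tb)/(1+t)$. Then
\[
  u-a=\frac{t(b-a)}{1+t},\qquad b-u=\frac{b-a}{1+t},
\]
and hence
\[
  R_p(u)=\frac{(b-a)\,t^{p}}{a+tb}.
\]
This is a one-variable problem. One route is to differentiate $\log$ of the right-hand side:
\[
  \frac{p}{t}-\frac{b}{a+tb}=0
  \quad\Longleftrightarrow\quad
  t_*=\frac{pa}{(1-p)b},
\]
and this critical point is unique, giving a maximum because the expression tends to $0$ at both ends. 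The cleaner route is weighted AM--GM with weights $1-p$ and $p$:
\[
  a+tb=(1-p)\frac{a}{1-p}+p\frac{tb}{p}
  \ge\Bigl(\frac{a}{1-p}\Bigr)^{1-p}\Bigl(\frac{tb}{p}\Bigr)^{p}.
\]
Equality holds iff $a/(1-p)=tb/p$, that is, at $t=t_*$. Therefore
\[
  R_p\le (b-a)\,\frac{p^p(1-p)^{1-p}}{b^p a^{1-p}},
\]
with equality attained, which is \eqref{eq:Gamma-p}.

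\textbf{Outer minimization.} Let $\Gamma(p)$ denote the right-hand side of \eqref{eq:Gamma-p}, and set $\phi(p)=\log\Gamma(p)$. Then
\[
  \phi(p)=\log(b-a)+p\log p+(1-p)\log(1-p)-p\log b-(1-p)\log a.
\]
This function is strictly convex on $(0,1)$, because $\phi''(p)=1/p+1/(1-p)>0$. Setting
\[
  \phi'(p)=\log\frac{p}{1-p}-\log\frac{b}{a}=0
\]
gives the unique critical point $p/(1-p)=b/a$, i.e.\ $p=p_*=b/(a+b)$. Strict convexity makes this the unique global minimizer. Note also that $\phi\to\log\bigl((b-a)/b\bigr)$ and $\phi\to\log\bigl((b-a)/a\bigr)$ at the two ends, both larger than the interior value, so there is no escape to the boundary.

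\textbf{Value at $p_*$.} Write $s=a+b$. Then
\[
  p_*^{p_*}(1-p_*)^{1-p_*}=\frac{b^{p_*}a^{1-p_*}}{s},
\]
and after cancelling against the denominator $b^{p_*}a^{1-p_*}$ we get $\Gamma(p_*)=(b-a)/(b+a)=c_{a,b}$. This gives \eqref{eq:minimax-value}. It also agrees with Lemma~\ref{lem:endpoint-ineq}: there $t_*=1$, i.e.\ $u=(a+b)/2$.

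\textbf{Uniqueness.} The final sentence of the proposition follows because $V_p$ has square-root one-step factor $R_p$, as shown in Theorem~\ref{thm:exact-dissipation}, so the worst-case contraction of $V_p$ is $\Gamma(p)^2$. This is minimized only at $p_*$, which is exactly \eqref{eq:lyapunov}.

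The only mildly delicate step is the inner maximization, in particular justifying that the bound is attained for every $p$. This is where the AM--GM equality case is used; the rest is routine calculus.
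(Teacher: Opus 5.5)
Your proof is correct, and its overall structure matches the paper's: find the inner maximum in closed form, then minimize its logarithm. Your outer step is the same as the paper's. Both compute $\frac{d}{dp}\log\Gamma(p)=\log\frac{ap}{b(1-p)}$ and use strict convexity of $\log\Gamma$ to get the unique minimizer $p_*=b/(a+b)$ and the value $c_{a,b}$.

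The difference is in the inner maximization. The paper differentiates $\log R_p(u)$ directly in $u$, finds the critical point $u_p^*=ab/(b(1-p)+ap)$, and evaluates $(u_p^*-a)/u_p^*=(b-a)p/b$ and $(b-u_p^*)/u_p^*=(b-a)(1-p)/a$. It leaves implicit why that critical point is the global maximum. You instead carry over the substitution $t=(u-a)/(b-u)$ from Lemma~\ref{lem:endpoint-ineq} and apply weighted AM--GM with weights $1-p$ and $p$. This gives the upper bound and its equality case $t_*=pa/((1-p)b)$ in one step, so no separate maximality argument is needed. Your $t_*$ corresponds exactly to the paper's $u_p^*$. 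Your route also shows Lemma~\ref{lem:endpoint-ineq} to be the special case $p=p_*$, $t_*=1$. The paper's version is quicker to write down; yours avoids calculus in the inner step and ties the proposition to the lemma.

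Two small remarks:
\begin{itemize}
\item Theorem~\ref{thm:exact-dissipation} only treats $p=p_*$. For general $p$ you should say that the same component computation gives $V_p(x_{k+1})=R_p(\tau_k)^2V_p(x_k)$.
\item Your check of the limits of $\phi$ at the ends of $(0,1)$ is harmless but not needed. Strict convexity together with an interior critical point already makes $p_*$ the unique global minimizer on $(0,1)$.
\end{itemize}
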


\begin{proof}
Differentiating $\log R_p(u)$ shows that its maximum is attained at
\[
  u_p^*=\frac{ab}{b(1-p)+ap}.
\]
At this point,
\[
  \frac{u_p^*-a}{u_p^*}=\frac{(b-a)p}{b},
  \qquad
  \frac{b-u_p^*}{u_p^*}=\frac{(b-a)(1-p)}{a},
\]
which proves \eqref{eq:Gamma-p}.  If the right-hand side is denoted by $\Gamma(p)$, then
\[
  \frac{d}{dp}\log\Gamma(p)
  =\log\frac{ap}{b(1-p)}.
\]
The unique critical point is $p=b/(a+b)$.  Strict convexity of $\log\Gamma$ gives uniqueness, and substitution yields \eqref{eq:minimax-value}.
\end{proof}

Thus, starting from a weighted geometric mean of the endpoint components and minimizing its worst admissible one-step factor recovers both the same exponents and the same constant $c_{a,b}$.

\section{The Lyapunov function hidden in the sharp-rate coboundary}\label{sec:derivation}

We now explain how \eqref{eq:lyapunov} can be read directly from the endpoint certificate in \cite{YangYuan2026}.  For a nonzero gradient $g_k$, define its normalized spectral energies
\begin{equation}\label{eq:wk}
  w_{k,\lambda}
  :=\frac{\norm{P_\lambda g_k}^2}{\norm{g_k}^2},
  \qquad \lambda\in\Lambda.
\end{equation}
For BB1, the natural normalized state is
\[
  \chi_k:=(w_k,w_{k-1}).
\]
Let $T$ be the induced two-step map and put
\[
  r(\chi_k):=\frac{\norm{g_{k+1}}}{\norm{g_k}}.
\]
The endpoint transfer function used in \cite{YangYuan2026} is
\begin{equation}\label{eq:h-transfer}
  h(\chi)
  =-\frac{b}{2(a+b)}\log w_a
   -\frac{a}{2(a+b)}\log w_b.
\end{equation}
Their coboundary identity has the form
\begin{equation}\label{eq:coboundary}
  \log r(\chi)
  =\log c_{a,b}-D(\chi)+h(T\chi)-h(\chi),
  \qquad D(\chi)\ge0.
\end{equation}
For the present tail, $D(\chi_k)=D_{a,b}(\tau_k)$.

The transfer term is exactly what is needed to de-normalize the state.  From \eqref{eq:wk} and \eqref{eq:h-transfer},
\begin{align}\label{eq:denormalization}
  \norm{g_k}e^{-h(\chi_k)}
  &=\norm{g_k}
    w_{k,a}^{\frac{b}{2(a+b)}}
    w_{k,b}^{\frac{a}{2(a+b)}}\\
  &=\norm{P_ag_k}^{\frac{b}{a+b}}
    \norm{P_bg_k}^{\frac{a}{a+b}}
   =\sqrt{\overline f(x_k)}.
\end{align}
Combining \eqref{eq:coboundary} with \eqref{eq:denormalization} gives
\begin{equation}\label{eq:denormalized-coboundary}
  \log\frac{\sqrt{\overline f(x_{k+1})}}
                 {\sqrt{\overline f(x_k)}}
  =\log c_{a,b}-D_{a,b}(\tau_k),
\end{equation}
which is precisely \eqref{eq:defect-identity} after squaring.

\section{R-linear convergence by invariant-face induction}\label{sec:rlinear}

We now use the Lyapunov law to prove $R$-linear convergence.  The argument is given first for BB1.  Fixed positive weighted delayed rules, including BB2, follow by spectral conjugacy.

Let the post-warm-up spectral set be
\[
  a=\lambda_1<\lambda_2<\cdots<\lambda_s=b,
  \qquad I:=\{1,\ldots,s\}.
\]
Set
\[
  \Delta_I:=\left\{w\in\R^I:w_i\ge0,\ \sum_{i\in I}w_i=1\right\},
  \qquad X_I:=\Delta_I\times\Delta_I.
\]
For $z\in\Delta_I$, define
\begin{equation}\label{eq:normalized-u-phi}
  u(z):=\sum_{i\in I}\lambda_i z_i,
  \qquad
  \phi_i(z):=\left(1-\frac{\lambda_i}{u(z)}\right)^2.
\end{equation}
For $\chi=(w,z)\in X_I$, let
\begin{equation}\label{eq:normalized-r}
  r_I(\chi)^2:=\sum_{i\in I}\phi_i(z)w_i.
\end{equation}
On the regular set $X_I^{\rm reg}:=\{\chi:r_I(\chi)>0\}$, put
\begin{equation}\label{eq:normalized-T}
  T_I(w,z)
  :=\left(\frac{\phi(z)\odot w}{r_I(w,z)^2},w\right),
\end{equation}
where $\odot$ denotes componentwise multiplication.  For an actual BB1 tail,
\begin{equation}\label{eq:normalized-orbit}
  \chi_k=(w_k,w_{k-1}),
  \qquad
  \chi_{k+1}=T_I\chi_k,
  \qquad
  \norm{g_{k+1}}=r_I(\chi_k)\norm{g_k}.
\end{equation}
If $r_I(\chi)=0$, the corresponding gradient update terminates.  For a nonempty $J\subset I$, identify $X_J=\Delta_J\times\Delta_J$ with the corresponding closed face of $X_I$.  Then $X_J$ is forward invariant, and the restrictions of $r_I$ and $T_I$ to $X_J$ are $r_J$ and $T_J$.

For $n\ge0$, define the terminally extended $n$-step multiplier by
\begin{equation}\label{eq:Rn-definition}
  \mathcal R_0(\chi):=1,
  \qquad
  \mathcal R_{n+1}(\chi):=
  \begin{cases}
    r_I(\chi)\mathcal R_n(T_I\chi),&r_I(\chi)>0,\\
    0,&r_I(\chi)=0.
  \end{cases}
\end{equation}
Thus $\mathcal R_n(\chi_k)=\norm{g_{k+n}}/\norm{g_k}$ on every nonterminal orbit segment, and the usual cocycle identity holds whenever the intermediate state is defined.

\begin{lemma}\label{lem:Rn-continuous}
For every $n\ge0$, the function $\mathcal R_n:X_I\to[0,\infty)$ is continuous.
\end{lemma}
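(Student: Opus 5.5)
The plan is induction on $n$, after two preliminary facts about $r_I$ and $T_I$ on $X_I$. First I will check that $u$, $\phi_i$ and $r_I$ are continuous on all of $X_I$. For $z\in\Delta_I$ we have $u(z)\in[a,b]$ and $a>0$, so $u$ is bounded away from zero. Hence each $\phi_i$ is continuous on $\Delta_I$, and so is $r_I=\bigl(\sum_i\phi_i(z)w_i\bigr)^{1/2}$. In particular $X_I^{\rm reg}$ is relatively open in $X_I$. On this open set the map $T_I$ in \eqref{eq:normalized-T} is continuous, since its denominator $r_I^2$ does not vanish there. Its image lies in $X_I$: the first component has nonnegative entries summing to one by the definition of $r_I^2$, and the second component is $w\in\Delta_I$.

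Second, I will record a uniform bound. For $u\in[a,b]$ and $\lambda_i\in[a,b]$ we have $|1-\lambda_i/u|\le (b-a)/a=:M$. Thus $\phi_i(z)\le M^2$, and therefore $r_I\le M$ on $X_I$. A trivial induction on \eqref{eq:Rn-definition} then gives $0\le\mathcal R_n\le M^n$ on all of $X_I$. This uses only that $T_I$ maps $X_I^{\rm reg}$ into $X_I$.

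The induction itself runs as follows. The base case is immediate, since $\mathcal R_0\equiv1$. Assume $\mathcal R_n$ is continuous on $X_I$, and fix $\chi\in X_I$.

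If $r_I(\chi)>0$, then a neighbourhood of $\chi$ lies in $X_I^{\rm reg}$. On that neighbourhood $\mathcal R_{n+1}=r_I\cdot(\mathcal R_n\circ T_I)$, which is a product of continuous functions and hence continuous at $\chi$.

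If $r_I(\chi)=0$, then $\mathcal R_{n+1}(\chi)=0$. For any $\chi'$ near $\chi$, either $\mathcal R_{n+1}(\chi')=0$, or
\[
\mathcal R_{n+1}(\chi')=r_I(\chi')\,\mathcal R_n(T_I\chi')\le M^n r_I(\chi').
\]
By continuity of $r_I$, $M^n r_I(\chi')\to0$ as $\chi'\to\chi$. Hence $\mathcal R_{n+1}$ is continuous at $\chi$ as well.

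The main obstacle is the terminal set $\{r_I=0\}$. Near it, $T_I$ is genuinely discontinuous, because the normalized direction can jump, so the composition $\mathcal R_n\circ T_I$ has no limit there. The remedy is that $\mathcal R_n\circ T_I$ is multiplied by the vanishing factor $r_I$, so the uniform bound $\mathcal R_n\le M^n$ is exactly what is needed; the rest of the argument is routine.
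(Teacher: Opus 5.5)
Your proof is correct and follows the paper's argument. The paper also inducts on $n$, gets continuity at regular states from the continuity of $r_I$ and $T_I$, and handles terminal states with the squeeze $0\le\mathcal R_{n+1}\le M^n r_I\to0$ from a uniform bound on $r_I$. Two differences are cosmetic: you use the explicit constant $(b-a)/a$ instead of the paper's $M_I=\max\{1,\max_{X_I}r_I\}$, and you spell out routine facts ($u\ge a>0$, $T_I(X_I^{\rm reg})\subseteq X_I$) that the paper leaves implicit.
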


\begin{proof}
The assertion is clear for $n=0$.  Suppose it holds for $n$, and set
\[
  M_I:=\max\left\{1,\max_{\chi\in X_I}r_I(\chi)\right\}<\infty.
\]
At a regular state, continuity follows from \eqref{eq:Rn-definition} and the continuity of $r_I$ and $T_I$.  If $r_I(\chi)=0$ and $\chi_j\to\chi$, then
\[
  0\le \mathcal R_{n+1}(\chi_j)
  \le M_I^n r_I(\chi_j)\longrightarrow0
  =\mathcal R_{n+1}(\chi).
\]
This proves the induction step.
\end{proof}

Let
\begin{equation}\label{eq:psi-I}
  \psi_I(w):=w_1^{\frac{b}{a+b}}w_s^{\frac{a}{a+b}},
  \qquad
  c_I:=\frac{b-a}{b+a}.
\end{equation}
If $T_I(w,z)=(w^+,w)$, then Lemma~\ref{lem:endpoint-ineq} gives
\begin{align}
  r_I(w,z)^2\psi_I(w^+)
  &=
  \left|1-\frac{a}{u(z)}\right|^{\frac{2b}{a+b}}
  \left|1-\frac{b}{u(z)}\right|^{\frac{2a}{a+b}}
  \psi_I(w)\notag\\
  &\le c_I^2\psi_I(w).
  \label{eq:normalized-lyapunov}
\end{align}
Consequently, along every nonterminal segment $\chi_n=(w_n,z_n)=T_I^n\chi$,
\begin{equation}\label{eq:normalized-lyapunov-n}
  \mathcal R_n(\chi)^2\psi_I(w_n)
  \le c_I^{2n}\psi_I(w_0).
\end{equation}

\begin{proposition}\label{prop:block-contraction}
For every nonempty finite index set $I$, there exist $L_I\ge1$ and $q_I\in(0,1)$ such that
\begin{equation}\label{eq:block-contraction}
  \min_{1\le\ell\le L_I}\mathcal R_\ell(\chi)
  \le q_I
  \qquad(\chi\in X_I).
\end{equation}
\end{proposition}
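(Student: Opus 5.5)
Induction on $|I|$ via compactness, with a pointwise statement first. The base case $|I|=1$ is immediate: $u(z)=\lambda_1$ for every $z$, so $\phi_1(z)=0$ and $r_I\equiv0$. Hence $\mathcal R_1\equiv0$ on $X_I$, and we may take $L_I=1$ and any $q_I\in(0,1)$.

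For $|I|\ge2$, I reduce to the pointwise claim that for every $\chi\in X_I$ some $\ell\ge1$ gives $\mathcal R_\ell(\chi)<1$. Given this, Lemma~\ref{lem:Rn-continuous} makes each set $U_\ell:=\{\chi:\mathcal R_\ell(\chi)<1\}$ open. These sets cover the compact $X_I$, so finitely many $U_1,\dots,U_{L}$ suffice. The continuous function $\min_{\ell\le L}\mathcal R_\ell$ is then $<1$ everywhere, so it attains a maximum $q_I<1$. If that maximum is $0$, I replace it by any positive number.

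To prove the pointwise claim, suppose instead that $\mathcal R_\ell(\chi)\ge1$ for all $\ell$. Then the orbit $\chi_n=T_I^n\chi$ never terminates, and $\mathcal R_n(\chi)\ge1$ for every $n$. I split by where $\chi$ lies.
\begin{itemize}
\item[] \emph{Case 1: $\chi$ lies in a proper face $X_J$.} Here $J$ is the union of the supports of $w$ and $z$. Forward invariance and the restriction property reduce the question to $X_J$. The induction hypothesis then gives $\mathcal R_\ell(\chi)\le q_J<1$ for some $\ell\le L_J$, a contradiction.
\item[] \emph{Case 2: $\chi$ has $w_1,w_s>0$ in its first coordinate.} (If an endpoint weight of $w$ vanishes, the support of $w_1$ after one step, together with the induction, already places us in Case 1 or near it; I expect the bookkeeping here to merge into Case 1.) Now \eqref{eq:normalized-lyapunov-n} combined with $\mathcal R_n\ge1$ gives $\psi_I(w_n)\le c_I^{2n}\psi_I(w_0)\to0$. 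So along the orbit one endpoint weight $w_{n,1}$ or $w_{n,s}$ tends to zero, at least along a subsequence.
\end{itemize}

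Case 2 is where I expect the main obstacle: I must turn "the orbit approaches the boundary" into a contraction. I would use the finite-face induction uniformly, not pointwise. By the induction hypothesis each proper face $X_J$ carries $L_J$ and $q_J$. Continuity of $\mathcal R_\ell$ then gives an open neighbourhood $N_J\supset X_J$ in $X_I$ on which $\min_{\ell\le L_J}\mathcal R_\ell\le q':=(1+q_J)/2<1$.

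The endpoint-vanishing set is $\{w_1=0\}\cup\{w_s=0\}$. It must be handled together with the second coordinate $z$, which is the previous $w$. Since $\psi_I(w_n)\to0$ and $z_{n+1}=w_n$, the state $\chi_{n+1}$ has both coordinates with a small endpoint weight. So $\chi_{n+1}$ is close to the face $X_{I\setminus\{1\}}$ or to $X_{I\setminus\{s\}}$, possibly after applying the pigeonhole principle to a subsequence. By compactness of these faces, $\chi_{n+1}$ lies in some $N_J$ for large $n$. This gives some $\ell$ with $\mathcal R_\ell(\chi_{n+1})<1$.

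That alone does not yet contradict $\mathcal R_n\ge1$ for all $n$, because partial products can exceed $1$. I would therefore iterate it. Each return near the boundary yields a block contraction by $q'$, and between blocks we re-enter a neighbourhood of a boundary face. The product then behaves like $\mathcal R_{n+1}(\chi)\prod q'\to0$, which contradicts $\mathcal R_m(\chi)\ge1$.

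The delicate points are two. First, $\psi_I(w_n)$ is decreasing along the orbit (given $\mathcal R\ge1$), so once the orbit is near the boundary it stays near it; this keeps every later block contraction available. Second, the growth factor $\mathcal R_{n+1}(\chi)$ before the first block must be bounded. I would bound it using $\mathcal R_n\le M_I^n$ with a fixed $n$ chosen from $\psi_I(w_0)$, though strictly the pointwise claim only needs finiteness of that factor.
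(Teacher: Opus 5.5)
There is a genuine gap in Case~2. Your architecture matches the paper's: induction on $|I|$, compactness reducing to a pointwise claim, $\psi_I(w_n)\le c_I^{2n}\psi_I(w_0)\to0$ under the no-contraction hypothesis, then chaining block contractions. The failure is in how you use $\psi_I(w_n)\to0$.

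\textbf{Where the step fails.} From $\psi_I(w_n)\to0$ you only learn that the \emph{first} coordinate of the state approaches $B_I=\{(w,z):w_1w_s=0\}$. You then claim that $\chi_{n+1}=(w_{n+1},w_n)$ is close to $X_{I\setminus\{1\}}$ or $X_{I\setminus\{s\}}$. That needs the \emph{same} endpoint to be small in both $w_{n+1}$ and $w_n$, and nothing forces this.

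For BB it typically fails. Take $|I|=2$ and let $e_1,e_2$ be the vertices of $\Delta_I$. The log-odds $\ell_n=\log(w_{n,1}/w_{n,2})$ satisfy $\ell_{n+1}=\ell_n-2\ell_{n-1}$. The characteristic roots $(1\pm i\sqrt7)/2$ generically make $\ell_n$ grow in modulus while changing sign every two or three steps. So $\psi_I(w_n)\to0$, yet the orbit keeps returning close to $(e_1,e_2)$ or $(e_2,e_1)$. These points lie in $B_I$ but are far from the only proper faces $\{(e_1,e_1)\}$ and $\{(e_2,e_2)\}$.

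A pigeonhole subsequence does not rescue the argument. Your chaining needs re-entry into a contraction neighbourhood immediately after each block. With neighbourhoods $N_J$ of faces, neither the re-entry times nor the growth of $\mathcal R$ between blocks is controlled.

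A smaller point: $\psi_I(w_n)$ need not be monotone, since individual factors $r_I(\chi_n)$ may be smaller than $c_I$. Only the bound $\psi_I(w_n)\le c_I^{2n}\psi_I(w_0)$ is available, though that bound is enough.

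\textbf{The missing idea.} Build the contraction neighbourhood around $B_I$ itself, not around the faces. This is the mechanism you only gestured at in your parenthetical remark.
\begin{enumerate}
\item If $\chi=(w,z)\in B_I$ and $r_I(\chi)>0$, then $T_I\chi=(w^+,w)$ lies in the proper face $X_J$ with $J=\{i:w_i>0\}$, because $w^+$ is supported in $J$.
\item One application of the induction hypothesis on $X_J$ only gives the bound $r_I(\chi)q_J$, which need not be below $1$. You must iterate it $m$ times on the invariant face, with $M_I\overline q^{\,m}<\tfrac12$. This gives $\min_{\ell\le1+m\overline L}\mathcal R_\ell<\tfrac12$ on all of $B_I$.
\item Lemma~\ref{lem:Rn-continuous}, including continuity at terminal states, makes this set an open neighbourhood $U_B\supset B_I$.
\item $\psi_I$ has a positive minimum on the compact set $X_I\setminus U_B$. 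Hence $\psi_I(w_n)\to0$ places $T_I^n\chi$ in $U_B$ for \emph{every} large $n$.
\item The blocks therefore chain without gaps, giving $\mathcal R_{t_j}(\chi)\le 2^{-j}\mathcal R_{t_0}(\chi)$. This contradicts $\mathcal R_n\ge1$.
\end{enumerate}
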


\begin{proof}
We argue by induction on $|I|$.  If $|I|=1$, the only state is terminal, and one may take $L_I=1$ and $q_I=1/2$.

Assume the result for every nonempty proper subset of $I$.  Let $M_I$ be as in Lemma~\ref{lem:Rn-continuous}, and set
\[
  \overline L:=\max_{\varnothing\ne J\subsetneq I}L_J,
  \qquad
  \overline q:=\max_{\varnothing\ne J\subsetneq I}q_J<1.
\]
Choose $m\ge1$ such that
\begin{equation}\label{eq:m-choice}
  M_I\overline q^{\,m}<\frac12.
\end{equation}
Consider the endpoint boundary
\begin{equation}\label{eq:BI}
  B_I:=\{(w,z)\in X_I:w_1w_s=0\}
      =\{(w,z):\psi_I(w)=0\}.
\end{equation}
Fix $\chi=(w,z)\in B_I$.  If $r_I(\chi)=0$, then $\mathcal R_1(\chi)=0$.  Otherwise, if $T_I\chi=(w^+,w)$ and $J:=\{i:w_i>0\}$, then $J\subsetneq I$ and both $w^+$ and $w$ are supported on $J$.  Hence $T_I\chi\in X_J$.  The face $X_J$ is forward invariant.  Applying the induction hypothesis successively $m$ times and using the cocycle property gives an index
\[
  1\le\ell\le 1+m\overline L
\]
such that
\[
  \mathcal R_\ell(\chi)
  \le M_I\overline q^{\,m}
  <\frac12.
\]
Thus, with $L_B:=1+m\overline L$, the continuous function
\[
  F_B(\chi):=\min_{1\le\ell\le L_B}\mathcal R_\ell(\chi)
\]
satisfies $F_B<1/2$ on $B_I$.  Therefore
\begin{equation}\label{eq:UB}
  U_B:=\{\chi\in X_I:F_B(\chi)<1/2\}
\end{equation}
is an open neighborhood of $B_I$.

We next show that every $\chi\in X_I$ has a contracting finite prefix.  Suppose to the contrary that
\begin{equation}\label{eq:no-prefix-contraction}
  \mathcal R_n(\chi)\ge1
  \qquad(n\ge1).
\end{equation}
Then the orbit never terminates.  Writing $T_I^n\chi=(w_n,z_n)$, \eqref{eq:normalized-lyapunov-n} and \eqref{eq:no-prefix-contraction} imply
\begin{equation}\label{eq:psi-to-zero}
  \psi_I(w_n)\le c_I^{2n}\psi_I(w_0)\longrightarrow0.
\end{equation}
The complement $X_I\setminus U_B$ is compact and disjoint from the zero set $B_I$.  If it is nonempty, $\psi_I$ has a positive minimum there.  Hence \eqref{eq:psi-to-zero} gives $T_I^n\chi\in U_B$ for all sufficiently large $n$.  Choose such an index $t_0$.  Given $t_j$, select $1\le\ell_j\le L_B$ with
\[
  \mathcal R_{\ell_j}(T_I^{t_j}\chi)<\frac12,
  \qquad t_{j+1}:=t_j+\ell_j.
\]
All selected states remain in $U_B$, and thus
\[
  \mathcal R_{t_j}(\chi)
  \le \mathcal R_{t_0}(\chi)\left(\frac12\right)^j.
\]
For large $j$ this is smaller than one, contradicting \eqref{eq:no-prefix-contraction}.

Hence the open sets $\{\chi:\mathcal R_n(\chi)<1\}$, $n\ge1$, cover the compact space $X_I$.  A finite subcover gives an integer $L_I$ for which
\[
  F_I(\chi):=\min_{1\le\ell\le L_I}\mathcal R_\ell(\chi)<1
  \qquad(\chi\in X_I).
\]
By continuity and compactness, $\max_{X_I}F_I<1$.  Replacing this maximum by any larger number in $(0,1)$ gives $q_I$ and proves \eqref{eq:block-contraction}.
\end{proof}

\begin{theorem}\label{thm:rlinear}
Let $\{g_k\}_{k\ge-1}$ be a BB1 gradient tail satisfying the post-warm-up convention with finite spectral set $\Lambda$.  Then there exist constants $C_\Lambda<\infty$ and $\rho_\Lambda\in(0,1)$ such that
\begin{equation}\label{eq:rlinear-gradient}
  \norm{g_k}\le C_\Lambda\rho_\Lambda^k\norm{g_0}
  \qquad(k\ge0).
\end{equation}
Consequently, the errors converge $R$-linearly with factor $\rho_\Lambda$, and the objective gaps converge $R$-linearly with factor $\rho_\Lambda^2$.
\end{theorem}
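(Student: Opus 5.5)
The plan is to apply Proposition~\ref{prop:block-contraction} with the index set $I$ corresponding to $\Lambda$, so $X_I=\Delta_I\times\Delta_I$. This gives $L:=L_I$ and $q:=q_I\in(0,1)$. The actual BB1 tail produces states $\chi_k=(w_k,w_{k-1})\in X_I$ for $k\ge0$; this is well defined because $g_{-1},g_0\ne0$ and Lemma~\ref{lem:tau-range} keeps every $g_k\ne0$ on the tail. By \eqref{eq:normalized-orbit}, $\chi_{k+1}=T_I\chi_k$ and $\norm{g_{k+n}}=\mathcal R_n(\chi_k)\norm{g_k}$. The orbit never terminates, since the endpoint components stay nonzero. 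Hence the cocycle identity $\mathcal R_{n+m}(\chi_k)=\mathcal R_n(\chi_k)\mathcal R_m(\chi_{k+n})$ holds along the whole orbit.

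Next we build stopping times greedily. Put $t_0:=0$. Given $t_j$, choose $1\le\ell_j\le L$ with $\mathcal R_{\ell_j}(\chi_{t_j})\le q$, and set $t_{j+1}:=t_j+\ell_j$. The cocycle identity then gives $\norm{g_{t_j}}\le q^j\norm{g_0}$, and $j\le t_j\le jL$. For general $k\ge0$, take $j$ with $t_j\le k<t_{j+1}$, so $k-t_j<L$. The one-step factor satisfies $r_I\le M_I$, and in fact $|1-\lambda/\tau_k|<b/a$ by Lemma~\ref{lem:tau-range}. Therefore
\[
  \norm{g_k}\le M_I^{L-1}\norm{g_{t_j}}\le M_I^{L-1}q^{j}\norm{g_0}.
\]
Since $k<t_{j+1}\le (j+1)L$, we have $j>k/L-1$, which yields
\[
  \norm{g_k}\le M_I^{L-1}q^{-1}\bigl(q^{1/L}\bigr)^{k}\norm{g_0}.
\]
We therefore take $\rho_\Lambda:=q^{1/L}$ and $C_\Lambda:=M_I^{L-1}/q$. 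These constants depend only on $\Lambda$ (through $I$ and its eigenvalues) and not on the particular trajectory, because Proposition~\ref{prop:block-contraction} is uniform over $X_I$.

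For the consequences, the identity $x_k-x_*=H^{-1}g_k$ gives
\[
  \norm{x_k-x_*}\le \lambda_{\min}^{-1}\norm{g_k}=a^{-1}\norm{g_k}
\]
on the active spectrum. Likewise $f(x_k)-f(x_*)=\tfrac12 g_k^{\mathsf T}H^{-1}g_k\le (2a)^{-1}\norm{g_k}^2$, which gives $R$-linear rate $\rho_\Lambda^2$ for the objective gaps.

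The main care point is bookkeeping rather than analysis. We must make sure that the normalized orbit genuinely lives in $X_I$ with the correct identification of $\mathcal R_n$ with gradient ratios, and that the between-stopping-times growth is bounded uniformly. This is routine once Proposition~\ref{prop:block-contraction} is available.
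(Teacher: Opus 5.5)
Your proposal is correct and follows the paper's proof essentially step for step. Both arguments choose greedy stopping times from Proposition~\ref{prop:block-contraction}, use the cocycle bound $\norm{g_{t_j}}\le q^j\norm{g_0}$, control the at most $L$ intermediate one-step multipliers uniformly, and set $\rho_\Lambda=q^{1/L}$. The differences are cosmetic: you obtain the slightly smaller constant $M_I^{L-1}/q$ in place of $M_I^{L}/q$, and you note explicitly that endpoint persistence rules out termination, where the paper simply remarks that termination would make the conclusion immediate.
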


\begin{proof}
Let $L:=L_I$, $q:=q_I$, and $M:=M_I$.  Starting from $t_0=0$, apply Proposition~\ref{prop:block-contraction} at $\chi_{t_j}$ and choose $1\le\ell_j\le L$ such that
\[
  \mathcal R_{\ell_j}(\chi_{t_j})\le q,
  \qquad t_{j+1}:=t_j+\ell_j.
\]
If the method terminates, the conclusion is immediate.  Otherwise,
\begin{equation}\label{eq:block-endpoint-decay}
  \norm{g_{t_j}}\le q^j\norm{g_0}.
\end{equation}
For $t_j\le k<t_{j+1}$, at most $L$ one-step multipliers occur, and hence
\[
  \norm{g_k}\le M^Lq^j\norm{g_0}.
\]
Since $t_{j+1}\le(j+1)L$ and $k<t_{j+1}$, we have $j>k/L-1$.  With
\begin{equation}\label{eq:rho-C}
  \rho_\Lambda:=q^{1/L},
  \qquad
  C_\Lambda:=M^Lq^{-1},
\end{equation}
this proves \eqref{eq:rlinear-gradient}.

Finally, on the $\Lambda$-spectral subspace,
\[
  a\norm{x_k-x_*}\le\norm{g_k}\le b\norm{x_k-x_*}
\]
and
\[
  \frac{1}{2b}\norm{g_k}^2
  \le f(x_k)-f(x_*)
  \le\frac{1}{2a}\norm{g_k}^2.
\]
The error and objective-gap conclusions follow.
\end{proof}

\begin{corollary}\label{cor:rlinear-weighted}
The conclusion of Theorem~\ref{thm:rlinear} holds for every fixed positive weighted delayed Rayleigh rule \eqref{eq:weighted-rule}, and in particular for BB2.
\end{corollary}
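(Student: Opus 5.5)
The plan is to reduce the weighted rule \eqref{eq:weighted-rule} to BB1 by a fixed diagonal change of spectral variables. This is the ``spectral conjugacy'' announced at the start of Section~\ref{sec:rlinear}. Given $\Psi>0$ on $\Lambda$, define for a gradient tail the weighted energies
\[
  \tilde w_{k,\lambda}:=\frac{\Psi(\lambda)\norm{P_\lambda g_k}^2}{\sum_{\mu\in\Lambda}\Psi(\mu)\norm{P_\mu g_k}^2}.
\]
By Remark~\ref{rem:weighted}, the reciprocal step is $\tau_k=u(\tilde w_{k-1})$, with the same function $u$ as in \eqref{eq:normalized-u-phi}. By \eqref{eq:component-update}, each component is multiplied by $1-\lambda/\tau_k$, so the weighted energies evolve exactly by \eqref{eq:normalized-T}:
\[
  \tilde w_{k+1}=\phi(\tilde w_{k-1})\odot\tilde w_k/\tilde r^2,
\]
with the normalizer $\tilde r^2=\sum_i\phi_i(\tilde w_{k-1})\tilde w_{k,i}$. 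Thus $\tilde\chi_k:=(\tilde w_k,\tilde w_{k-1})$ satisfies $\tilde\chi_{k+1}=T_I\tilde\chi_k$, with the same map $T_I$ as for BB1. The difference lies only in the norm being tracked: we have
\[
  \norm{g_{k+1}}_\Psi=r_I(\tilde\chi_k)\norm{g_k}_\Psi,
  \qquad
  \norm{v}_\Psi^2:=\sum_\lambda\Psi(\lambda)\norm{P_\lambda v}^2.
\]

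The argument then proceeds in three steps. First, the computation above shows that the orbit $\tilde\chi_k$ lies in $X_I$ and is generated by $T_I$, and that termination ($r_I=0$) corresponds exactly to $g_{k+1}=0$. Second, Proposition~\ref{prop:block-contraction} is a statement purely about $T_I$, $r_I$ and $\mathcal R_n$ on $X_I$, so it applies verbatim to $\tilde\chi_k$. The block argument in the proof of Theorem~\ref{thm:rlinear} then gives
\[
  \norm{g_k}_\Psi\le M^Lq^{-1}\rho^k\norm{g_0}_\Psi.
\]
Third, we return to the Euclidean norm using the equivalence
\[
  \psi_{\min}\norm{v}^2\le\norm{v}_\Psi^2\le\psi_{\max}\norm{v}^2
\]
for $v$ in the $\Lambda$-spectral subspace, where $\psi_{\min},\psi_{\max}$ are the extremes of $\Psi$ on the finite set $\Lambda$. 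This multiplies $C_\Lambda$ by $\sqrt{\psi_{\max}/\psi_{\min}}$ and leaves $\rho_\Lambda$ unchanged. The error and objective-gap conclusions follow exactly as at the end of the proof of Theorem~\ref{thm:rlinear}. BB2 is the case $\Psi(\lambda)=\lambda$.

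The main point to check carefully is the first step. We must verify that the weighted normalization really produces the same map $T_I$, including the delayed second coordinate and the terminal convention. Since the weights enter only multiplicatively and componentwise, and $\phi_i$ depends on the state only through $u$, this should be a direct computation. We also need the endpoint assumption: $\tilde w_{k,1}$ and $\tilde w_{k,s}$ remain positive because $\Psi>0$. This assumption is not actually needed for Proposition~\ref{prop:block-contraction}, which covers all of $X_I$.
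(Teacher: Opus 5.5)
Your proposal is correct and is essentially the paper's argument. Your weighted energies $\tilde w_{k,\lambda}$ and weighted norm $\norm{\cdot}_\Psi$ are exactly the normalized spectral energies and the Euclidean norm of $\Psi(H)^{1/2}g_k$, and the paper notes that this conjugated sequence is itself a BB1 gradient sequence with the same spectral supports. The only difference is that the paper cites Theorem~\ref{thm:rlinear} for the conjugated sequence and then transfers the bound by norm equivalence, whereas you re-run Proposition~\ref{prop:block-contraction} and the block argument directly on the normalized orbit.
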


\begin{proof}
On the $\Lambda$-spectral subspace, set $S:=\Psi(H)^{1/2}$ and $\widetilde g_k:=Sg_k$.  Since $S$ commutes with $H$ and is invertible there,
\[
  \widetilde g_{k+1}=(I-\alpha_k^\Psi H)\widetilde g_k,
  \qquad
  \alpha_k^\Psi
  =\frac{\norm{\widetilde g_{k-1}}^2}
         {\widetilde g_{k-1}^{\mathsf T}H\widetilde g_{k-1}}.
\]
Thus $\{\widetilde g_k\}$ is a BB1 gradient sequence with the same spectral supports at every index.  Theorem~\ref{thm:rlinear} applies to $\widetilde g_k$, and the fixed norm equivalence induced by $S$ transfers the estimate to $g_k$.  The choice $\Psi(\lambda)=\lambda$ gives BB2.
\end{proof}

\section{A two-dimensional BB1 trajectory}\label{sec:numerics}

We show an elementary example
\begin{equation}\label{eq:example-H}
  H=\diag(1,10),
  \qquad b=0.
\end{equation}
To place the computation directly in the post-warm-up convention, take the state pair
\begin{equation}\label{eq:example-state}
  g_{-1}=(1,2)^{\mathsf T},
  \qquad
  \widehat\alpha=\frac{5}{41},
  \qquad
  g_0=(I-\widehat\alpha H)g_{-1}
      =\left(\frac{36}{41},-\frac{18}{41}\right)^{\mathsf T}.
\end{equation}
Since $x_j=H^{-1}g_j$ and
\[
  x_0-x_{-1}=-\widehat\alpha g_{-1},
\]
this is a valid secant-generated BB state.  Starting at $k=0$, every displayed step is the genuine delayed BB1 update
\begin{equation}\label{eq:example-recurrence}
  \alpha_k
  =\frac{\norm{g_{k-1}}^2}{g_{k-1}^{\mathsf T}Hg_{k-1}},
  \qquad
  g_{k+1}=(I-\alpha_kH)g_k.
\end{equation}
The active endpoints are $a=1$ and $b=10$, so the endpoint Lyapunov function and its universal contraction bound are
\begin{equation}\label{eq:example-lyapunov}
  V_k:=\overline f(x_k)
  =|g_{k,1}|^{20/11}|g_{k,2}|^{2/11},
  \qquad
  c_{1,10}^2=\left(\frac9{11}\right)^2
  =\frac{81}{121}\approx0.669421.
\end{equation}
For comparison, write
\[
  F_k:=f(x_k)-f(x_*)
  =\frac12g_k^{\mathsf T}H^{-1}g_k.
\]
Table~\ref{tab:bb1-trajectory} lists eleven consecutive iterates.  The computation was carried out with 100-digit arithmetic and rounded for display; an upward arrow marks an increase of $F_k$ from the preceding row.

\begin{table}[htbp]
\centering
\caption{A nonmonotone BB1 trajectory and the monotone endpoint Lyapunov function.  The ratio in the last column is bounded above by $81/121\approx0.669421$ at every displayed step.}
\label{tab:bb1-trajectory}
\begingroup
\scriptsize
\setlength{\tabcolsep}{3.4pt}
\renewcommand{\arraystretch}{1.12}
\begin{tabular}{@{}rccccc@{}}
\toprule
$k$ & $\alpha_k$ & $F_k$ & $\norm{g_k}$ & $V_k$ & $V_{k+1}/V_k$\\
\midrule
0  & $0.1219512$    & $0.395122$                    & $0.981688$                 & $0.679680$                    & $0.599200$\\
1  & $0.3571429$    & $0.297661$                    & $0.776970$                 & $0.407264$                    & $0.531734$\\
2  & $0.8783784$    & $0.125892$                    & $0.554124$                 & $0.216556$                    & $3.15078\times10^{-2}$\\
3  & $0.3571429$    & $0.187852\,\uparrow$         & $1.92985$                  & $6.82321\times10^{-3}$       & $0.531734$\\
4  & $0.1000879$    & $1.23086\,\uparrow$          & $4.96021$                  & $3.62813\times10^{-3}$       & $0.229653$\\
5  & $0.1000055$    & $6.08979\times10^{-4}$       & $3.51434\times10^{-2}$    & $8.33213\times10^{-4}$       & $0.138746$\\
6  & $0.8783784$    & $4.92497\times10^{-4}$       & $3.13846\times10^{-2}$    & $1.15605\times10^{-4}$       & $3.15078\times10^{-2}$\\
7  & $0.9999999995$ & $7.28493\times10^{-6}$       & $3.81705\times10^{-3}$    & $3.64247\times10^{-6}$       & $1.99232\times10^{-17}$\\
8  & $0.9999978542$ & $1.40686\times10^{-11}$      & $1.67741\times10^{-5}$    & $7.25694\times10^{-23}$      & $7.36687\times10^{-11}$\\
9  & $0.1000000$    & $1.13955\times10^{-9}\,\uparrow$ & $1.50967\times10^{-4}$ & $5.34610\times10^{-33}$      & $2.45912\times10^{-3}$\\
10 & $--$           & $7.64263\times10^{-36}$      & $4.31747\times10^{-18}$   & $1.31467\times10^{-35}$      & $--$\\
\bottomrule
\end{tabular}
\endgroup
\end{table}
\FloatBarrier

The original objective is visibly nonmonotone: for example,
\[
  \frac{F_3}{F_2}\approx1.49217,
  \qquad
  \frac{F_4}{F_3}\approx6.55229,
  \qquad
  \frac{F_9}{F_8}\approx80.9996.
\]
At the same three transitions, the gradient norm is multiplied by approximately $3.48271$, $2.57025$, and $8.99998$, respectively.  Nevertheless, every Lyapunov ratio in the last column is below $81/121$.  The trajectory therefore exhibits repeated increases of both the original objective and the gradient norm, while $V_k$ continues to decrease at every step exactly as predicted by Theorem~\ref{thm:exact-dissipation}.

\section{Conclusion and limitations}

For each post-warm-up two-step state with two persistent spectral endpoints, the quadratic BB dynamics admit the explicit endpoint Lyapunov function
\[
  \overline f(x)
  =
  \norm{P_a\nabla f(x)}^{\frac{2b}{a+b}}
  \norm{P_b\nabla f(x)}^{\frac{2a}{a+b}}.
\]
It satisfies the exact dissipation law
\[
  \overline f(x_{k+1})
  =
  \left(\frac{b-a}{b+a}\right)^2
  e^{-2D_{a,b}(\tau_k)}
  \overline f(x_k),
  \qquad D_{a,b}(\tau_k)\ge0,
\]
and the endpoint exponents are the unique minimax weights among endpoint monomials.  De-normalization of the coboundary certificate in \cite{YangYuan2026} gives the function directly.

The same Lyapunov law yields a proof of $R$-linear convergence.  If the full gradient norm has not contracted, the normalized endpoint product tends to zero.  Induction over the finite index set and compactness give a uniform finite-block contraction, and hence $R$-linear decay of the gradient and error.  Fixed positive weighted delayed rules follow by spectral conjugacy.

The block constants obtained by compactness are nonconstructive and do not recover the sharp factor $(b-a)/(b+a)$ in \cite{YangYuan2026}, and the face induction is specific to finite spectra; continuous-spectrum and nonlinear extensions require different arguments.

\section*{Acknowledgements}
Shutai Yang thanks Shixiang Chen for supervising his undergraduate thesis at the University of Science and Technology of China, and Xiaowei Xu for supervising his project under the National College Students Innovation and Entrepreneurship Training Program at the same university.  The line of research from which the present article emerged grew out of those two undergraduate projects.

\section*{Statements and Declarations}

\subsection*{Funding}
The work of Shutai Yang was supported by the National College Students Innovation and Entrepreneurship Training Program, administered by the University of Science and Technology of China (Project No.~202510358091).  The work of Ya-xiang Yuan was supported by the National Natural Science Foundation of China (Grant No.~12288201).

\subsection*{Competing interests}
The authors have no relevant financial or non-financial interests to disclose.

\subsection*{Author contributions}
Shutai Yang conceived the study, developed the mathematical results, carried out the analysis, and wrote the original draft.  Ya-xiang Yuan provided guidance on the research direction and reviewed the manuscript.  Both authors read and approved the final manuscript.

\subsection*{Declaration on the use of generative AI}
During the preparation of this manuscript, Shutai Yang used OpenAI language models for language and \LaTeX{} editing, and the presentation of mathematical notation.  He independently checked and revised all model-assisted material.  Both authors reviewed and approved the final manuscript and take full responsibility for its content.

\subsection*{Data availability}
No datasets were generated or analyzed during the current study.

\subsection*{Code availability}
No research software or reusable computational code is associated with this theoretical note.  The numerical values in Section~\ref{sec:numerics} are direct evaluations of the displayed two-dimensional recurrence.

\end{document}